\def\BBox{\kern  -0.2cm\hbox{\vrule width 0.2cm height 0.2cm}}
\newtheorem{lemma}{Lemma}[section]
\newtheorem{theorem}{Theorem}[section]
\newtheorem{definition}{Definition}[section]
\newtheorem{notation}{Notation}[section]
\newtheorem{proposition}{Proposition}[section]
\newtheorem{remark}{Remark}[section]
\title{A formulation of a $(q+1,8)$--cage}
\author{ M. Abreu$^{1}$, G. Araujo-Pardo$^{2}$, C. Balbuena$^{3}$,
D. Labbate$^{1}$
\\[2ex]
$^1${\footnotesize Dipartimento di Matematica Informatica ed Economia}\\ {\footnotesize Universit\`{a} degli Studi della Basilicata}\\
{\footnotesize Viale dell'Ateneo Lucano 10, I-85100 Potenza, Italy.}\\
{\footnotesize marien.abreu@unibas.it, \, \, domenico.labbate@unibas.it}\\
\\
$^2${\footnotesize Instituto de Matem\'{a}ticas}\\
{\footnotesize Universidad Nacional Aut\'{o}noma de M\'{e}xico} \\
{\footnotesize M\'{e}xico D. F., M\'{e}xico. }\\
{\footnotesize garaujo@matem.unam.mx}\\
\\
$^3${\footnotesize Departament de Matem\'{a}tica Aplicada III}\\
{\footnotesize Universitat Polit\`{e}cnica de Catalunya}\\
{\footnotesize Campus Nord, Edifici C2, C/ Jordi Girona 1 i 3 E-08034
Barcelona, Spain.}\\
{\footnotesize  m.camino.balbuena@upc.edu}\\
}
\date{}
\begin{document}
\doublespacing
\maketitle

\begin{abstract}

Let $q\ge 2$ be a prime power.
In this note we present a formulation for obtaining the known $(q+1,8)$-cages
which has allowed us to construct small $(k,g)$--graphs for $k=q-1, q$ and   $g=7,8$. Furthermore, we also obtain smaller $(q,8)$-graphs for even prime power $q$.

\end{abstract}

{\bf Keywords:}
Cages, girth, Moore graphs, perfect dominating sets.

{\bf MSC2010:} 05C35, 05C69, 05B25
%% Submitted Subject Classification

\newpage

%%%%%%%%%%%%%%%%%%%%%%%%%%%%%%%%%%%%%%%%%%%%%%
\section{Introduction}

Throughout this paper, only undirected simple graphs without loops
or multiple edges are considered. Unless otherwise stated, we
follow  the book by Bondy and Murty \cite{BM} for terminology and notation.

% Let $G$ be a graph with vertex set
%$V=V(G)$ and edge set
%  $E=E(G)$.  The \emph{girth}  of a graph $G$ is the number $g=g(G)$ of edges in a
%smallest cycle. For every $v\in V$, $N_G(v)$ denotes the \emph{neighbourhood} of $v$,
%that is, the set of all vertices adjacent to $v$. The \emph{degree} of a vertex $v\in V$ is the
%cardinality of    $N_G(v)$.   A graph is called
%\emph{regular} if all the vertices have the same degree. A  \emph{$(k,g)$-graph} is a  $k$-regular graph with girth $g$.  Erd\H os and Sachs
% \cite{ES63}     proved the existence of  $(k,g)$-graphs
% for all values of $k$ and $g$ provided that $k \ge 2$. Since then most work carried
% out has  focused on constructing a smallest one
% \cite{AFLN06,AFLN08,AABL12,AABL14,ABH10,BI73,B08,B09,B66,E96,GH08,LUW97,M99,OW81}.
%A  \emph{$(k,g)$-cage} is a  $k$-regular graph with girth $g$ having the smallest possible
%number of vertices. Cages have been intensely studied
%since they were introduced by
%Tutte \cite{T47} in 1947. More details about constructions of cages can be found in the survey by Wong
%\cite{W82} or in the  book  by Holton and Sheehan \cite{HS93} or in  the more
%recent survey by Exoo and Jajcay \cite{EJ08}.
%%{\color{blue}denoted by $n(r,g)$}.

Let $G$ be a graph with vertex set $V=V(G)$ and edge set $E=E(G)$. The \emph{girth} of a graph $G$ is the number $g=g(G)$ of edges in a
smallest cycle. For every $v\in V$, $N_G(v)$ denotes the \emph{neighbourhood} of $v$,
i.e. the set of all vertices adjacent to $v$. The \emph{degree} of a vertex $v\in V$ is the
cardinality of    $N_G(v)$.  Let $S\subset V(G)$, then we denote by $N_G(S)=\cup_{s\in S}N_{G-S}(s)$ and by $N_G[S]=S\cup N_G(S)$.

 A graph is called
\emph{regular} if all the vertices have the same degree. A  \emph{$(k,g)$-graph} is a  $k$-regular graph with girth $g$.  Erd\H os and Sachs
 \cite{ES63}     proved the existence of  $(k,g)$-graphs
 for all values of $k$ and $g$ provided that $k \ge 2$. Since then most work carried
 out has  focused on constructing a smallest one (cf. e.g.
 \cite{AFLN06,AFLN08,AABL12,AABLS13,AABLS13EXT,AABL14,ABH10,BI73,B08,B09,B66,E96,GH08,LUW97,M99,OW81}).
A  \emph{$(k,g)$-cage} is a  $k$-regular graph with girth $g$ having the smallest possible
number of vertices. Cages have been intensely studied
since they were introduced by
Tutte \cite{T47} in 1947. More details about constructions of cages can be found in the
%survey by Wong
%\cite{W82} or in the  book  by Holton and Sheehan \cite{HS93} or in  the more
recent survey by Exoo and Jajcay \cite{EJ08}.
%{\color{blue}denoted by $n(r,g)$}.

In this note we are interested in $(k,8)$--cages.
Counting the number of vertices in the
distance partition with respect to an edge yields the following lower bound
on the order of a $(k,8)$-cage:

  \begin{equation}\label{lower} n_0(k,8) = 2(1+(k-1)+(k-1)^2+(k-1)^3).\end{equation}
 %Biggs   \cite{B96}
% calls the \emph{excess} of a  $(k,g)$-graph $G$
%the difference $|V(G)|-n_0(k,g)$. The    construction
%of graphs with small excess is a difficult task.
%Biggs is the author of a report on distinct methods for constructing  cubic cages
%  \cite{B98}.
  %Royle \cite{royle} keeps a web-site in which  all the cages known so far appear.

A  $(k,8)$--cage with $n_0(k,8)$ vertices is called a Moore \emph{$(k,8)$--graph} (cf. \cite{BM}). These graphs have been constructed as  the incidence graphs of generalized quadrangles $Q(4,q)$ and $W(q)$ \cite{B66,EJ08,PT84}, which are known to exist for $q$ a prime power and $k=q+1$
and no example is known when $k-1$ is not a prime power (cf. \cite{B97,B96,GR00,LW94}).
Since they are incidence graphs, these cages are bipartite and have diameter $4$.

%Let $q$ be a prime power, the known $(q+1,8)$--cages  have been
%constructed as incidence graphs of generalized quadrangles $Q(4,q)$ and $W(q)$, for details see.

In this note, we present in Definition \ref{cage} a formulation for obtaining the known $(q+1,8)$-cages  with $q\ge 2$ a prime power. Then we check in Theorem \ref{main8} that the graph $\Gamma_q$ with such a labelling is a
$(q+1,8)$--cage, for each prime power $q\ge 2$. Finally, we describe in Section \ref{appendix} the utility of this equivalent description for the known $(q+1,8)$--cages, for constructing small  $(q-1,8)$--graphs \cite{AABL14} and $(q+1,7)$--graphs \cite{AABLS13,AABLS13EXT}, that give rise to new and better upper bounds. Furthermore,  we also obtain smaller $(q,8)$-graphs for even prime power $q$.

\section{The formulation}\label{main}

We start by presenting  the following graph:

\begin{definition}\label{cage}
Let $\mathbb{F}_q$ be a finite  field with $q\ge 2$ a prime power and $\varrho$  a symbol not belonging to $\mathbb{F}_q$. Let $\Gamma_q= \Gamma_q[W_0,W_{1}]$ be a bipartite graph with vertex sets
$W_i=\mathbb{F}_q^3\cup\{(\varrho,b,c)_i, (\varrho,\varrho,c)_i: b, c \in \mathbb{F}_q \}\cup \{(\varrho, \varrho, \varrho)_i\}$, $i=0,1$, and
edge set defined as follows:
$$ \begin{array}{l}\mbox{For all } a\in \mathbb{F}_q\cup \{\varrho\}  \mbox{ and for all } b,c\in \mathbb{F}_q:\\[2ex]
N_{\Gamma_q}((a,b,c)_{1} )= \left\{\begin{array}{ll}
    \{(w,~aw+b,~a^2w+2ab+c)_{0}: w \in \mathbb{F}_q \}\cup \{(\varrho,a,c)_{0} \}  &\mbox{ if }   a\in \mathbb{F}_q;
\\[2ex]
\{( c ,b,w )_{0}: w \in \mathbb{F}_q \}\cup \{(\varrho,\varrho,c)_{0} \}  &\mbox{
if }  a= \varrho.
\end{array}\right.
\\
\mbox{}\\
N_{\Gamma_q}((\varrho,\varrho,c)_{1})= \{(\varrho,c,w)_{0}: w \in \mathbb{F}_q \}\cup
\{(\varrho,\varrho,\varrho)_{0} \}\\[1ex]
N_{\Gamma_q}((\varrho,\varrho,\varrho)_{1})= \{(\varrho,\varrho,w)_{0}: w \in \mathbb{F}_q\} \cup
\{(\varrho,\varrho,\varrho)_{0} \}.
\end{array}
$$
Or equivalently
$$ \begin{array}{l}\mbox{For all } i\in \mathbb{F}_q\cup \{\varrho\}  \mbox{ and for all }  j,k\in \mathbb{F}_q:\\[2ex]
N_{\Gamma_q}((i,j,k)_{0} )= \left\{\begin{array}{ll}
    \{(w,~j-wi,~w^2i-2wj+k)_{1}: w \in \mathbb{F}_q \}\cup \{(\varrho,j,i)_1 \}  &\mbox{ if }   i\in \mathbb{F}_q;
\\[2ex]
\{( j ,w,k )_{1}: w \in \mathbb{F}_q \}\cup \{(\varrho,\varrho,j)_{1} \}  &\mbox{ if }  i= \varrho.
\end{array}\right.
\\
\mbox{}\\
N_{\Gamma_q}((\varrho,\varrho,k)_{0})= \{(\varrho,w,k)_{1}: w \in \mathbb{F}_q \}\cup
\{(\varrho,\varrho,\varrho)_{1} \}; \\[1ex]
N_{\Gamma_q}((\varrho,\varrho,\varrho)_{0})= \{(\varrho,\varrho,w)_{1}: w \in \mathbb{F}_q\} \cup
\{(\varrho,\varrho,\varrho)_{1} \}.
\end{array}
$$
\end{definition}

\noindent Note that $\varrho$ is just a symbol not belonging to $\mathbb{F}_q$ and no arithmetical operation  will be performed  with it.

Next, we will make use of the following induced subgraph $B_q$ of $\Gamma_q$.

\begin{notation}\label{Bq}
Let $B_q=B_q[V_0,V_1]$ be a bipartite graph with vertex set  $V_i=\mathbb{F}_q^3$, $i=0,1$,
and edge set $E(B_q)$ defined as follows:
$$
\mbox{For all }   a, b,c\in \mathbb{F}_q  : N_{B_q}((a,b,c)_{1} )=
    \{(j,~aj+b,~a^2j+2ab+c)_{0}: j \in \mathbb{F}_q \} .
$$
\end{notation}

In order to proceed, we need the following definition of a $q$-regular bipartite graph $H_q$  introduced by Lazebnik, Ustimenko and Woldar \cite{LUW97}.

\begin{definition}\cite{LUW97} \label{Hq}
Let  $\mathbb{F}_q$ be a finite field with $q\ge 2$.
Let $H_{q }=H_q[U_0,U_1]$ be a bipartite graph with vertex set $U_r=\mathbb{F}_q^3$, $r=0,1$, and edge set $E(H_q)$ defined as follows:
$$
\mbox{For all }   a, b,c\in \mathbb{F}_q  : N_{H_q}((a,b,c)_{1} )=
    \{(w,~aw+b,~a^2w +c)_{0}: w \in \mathbb{F}_q \} .
 $$
\end{definition}

Lazebnik, Ustimenko and Woldar proved that $H_q$ given in Definition \ref{Hq} is $q$-regular, bipartite, of girth $8$ and order $2q^3$.

\begin{lemma}\label{HqisoBq}
The graph $B_q$
is isomorphic to the graph $H_q$.
\end{lemma}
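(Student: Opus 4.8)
The plan is to exhibit an explicit isomorphism between the two graphs $B_q$ and $H_q$. Both are bipartite graphs on vertex sets $V_i = U_i = \mathbb{F}_q^3$ for $i=0,1$, and in both cases the adjacencies are governed by a polynomial rule indexed by a parameter $w$ (or $j$) ranging over $\mathbb{F}_q$. The only difference between the two neighbourhood definitions is in the third coordinate: in $B_q$ the third coordinate of a neighbour of $(a,b,c)_1$ is $a^2 j + 2ab + c$, whereas in $H_q$ the corresponding entry is $a^2 w + c$. So the natural idea is to look for a bijection on each side that is the identity (or nearly so) on the first two coordinates and that absorbs the extra $2ab$ term in the third coordinate.

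\medskip

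Concretely, I would \textbf{define} a map $\phi\colon V(B_q)\to V(H_q)$ by specifying it separately on each colour class. On the $W_1$-side I would try $\phi((a,b,c)_1) = (a,b,c)_1$ (the identity), and on the $W_0$-side I would try a shear of the form $\phi((x,y,z)_0)=(x,y,z-\alpha(x,y))_0$ for some polynomial correction $\alpha$ to be determined. The defining data are that $(a,b,c)_1$ in $B_q$ is adjacent to $(j,\,aj+b,\,a^2j+2ab+c)_0$. Writing $x=j$, $y=aj+b$, so that $a = ?$ and $b = y - aj$, one can re-express the "extra" term $2ab$ in terms of the coordinates $x,y$ of the $W_0$-vertex together with the edge-parameter; the precise candidate for $\alpha$ comes from forcing $z - \alpha(x,y)$ to equal the $H_q$-value $a^2 j + c = (a^2 j + 2ab + c) - 2ab$. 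A short computation should show that along an edge $2ab$ can be written as a function of $x=j$ and $y=aj+b$ alone, so that $\alpha(x,y)$ (perhaps $\alpha(x,y)=2xy$ or a similar quadratic, up to sign conventions) does the job. I would then \textbf{verify} that $\phi$ sends each edge of $B_q$ to an edge of $H_q$: take an arbitrary edge $\{(a,b,c)_1,(j,aj+b,a^2j+2ab+c)_0\}$ of $B_q$, apply $\phi$ to both endpoints, and check that the image pair satisfies the $H_q$ adjacency relation $N_{H_q}((a,b,c)_1)$. Since both graphs are $q$-regular on the same vertex set and $\phi$ is a bijection preserving the bipartition, mapping edges to edges is enough to conclude it is a graph isomorphism (a degree count rules out collapsing distinct edges).

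\medskip

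\textbf{The main obstacle} will be pinning down the correct correction term $\alpha$ and the correct form of $\phi$ so that the substitution works identically in $\mathbb{F}_q$, including in \emph{characteristic two}, where the coefficient $2$ vanishes. In characteristic $2$ the term $2ab$ is simply $0$, so $B_q$ and $H_q$ literally coincide and the identity map is an isomorphism; the content is therefore entirely in odd characteristic, where the factor $2$ is invertible and the quadratic correction must be chosen compatibly. I would handle this by treating the computation uniformly over $\mathbb{F}_q$ (not dividing by $2$ unless necessary) and checking that the claimed $\alpha$ reduces correctly in both parities. Once the edge-preservation identity is verified by direct substitution of $y=aj+b$ into the correction term, the proof closes immediately, so the only real work is this one algebraic verification.
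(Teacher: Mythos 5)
Your overall idea---absorb the extra $2ab$ term by a coordinate shear---is the right one, but you have put the shear on the wrong side of the bipartition, and the computation you defer to (``a short computation should show that along an edge $2ab$ can be written as a function of $x=j$ and $y=aj+b$ alone'') is false in odd characteristic. Concretely: fix the $W_0$-vertex with $x=j=0$ and $y=b=1$. Its $B_q$-neighbours $(a,1,c)_1$ realise every value of $a\in\mathbb{F}_q$, and the required correction on the incident edge is $2ab=2a$, which varies with $a$. So there is no function $\alpha(x,y)$ (nor any vertex map on the $W_0$ side at all, once you insist on the identity on $W_1$ and note that both neighbourhoods contain exactly one vertex per first coordinate) satisfying $\alpha(j,aj+b)=2ab$ for all $a,b,j$. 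The obstruction is that $2ab$ is an invariant of the $W_1$-endpoint of the edge, not of the $W_0$-endpoint.

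The fix is exactly to flip your construction: the paper takes $\sigma((a,b,c)_1)=(a,b,2ab+c)_1$ and the identity on the $W_0$ side. Then $N_{H_q}((a,b,2ab+c)_1)=\{(w,\,aw+b,\,a^2w+2ab+c)_0 : w\in\mathbb{F}_q\}$, which is precisely $N_{B_q}((a,b,c)_1)$, so edges are matched with no further computation. This works because the correction $2ab$ depends only on the first two coordinates of the $W_1$-vertex, which your shear is free to use. Your remark about characteristic two (where $2ab=0$ and the graphs coincide, so the identity suffices) is correct, and your closing observation that a bipartition-preserving bijection sending edges to edges between two $q$-regular graphs is an isomorphism is fine; the gap is solely in the existence of the map you propose.
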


\begin{proof}
Let $H_q$ be the bipartite graph from Definition \ref{Hq}. Since the map $\sigma: B_q \to H_q$ defined by
$\sigma((a,b,c)_{1})=(a,b,2ab+c)_{1}$ and $\sigma((x,y,z)_{0})=(x,y,z)_{0}$ is an isomorphism,
the result holds.
\end{proof}
%%%%%%%%%%%%%%%%%%%%%%%%%%%%%%%%%%%%%5

In what follows, we will obtain the graph $\Gamma_q$ from the graph $B_q$ adding some new vertices and edges.
 We need a preliminary lemma.

\begin{lemma}\label{claim01}
Let $B_q$ be the graph from Notation \ref{Bq}.
For any given $a\in \mathbb{F}_q $,  the vertices in the set
$\{(a,b,c)_{1}:  b,c \in \mathbb{F}_q \}$ are mutually at distance
at least four.
And, for any given $i\in \mathbb{F}_q $, the vertices in the set
$\{(i,j, k)_{0}:  j,k  \in \mathbb{F}_q \}$ are mutually at distance at least four.
\end{lemma}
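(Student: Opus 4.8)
The plan is to exploit the explicit adjacency rule for $B_q$ and argue that two distinct vertices in a common ``fiber'' (same first coordinate) can be neither adjacent nor joined by a path of length $2$ or $3$. Since $B_q$ is bipartite with parts $V_0,V_1$, and the set $\{(a,b,c)_1 : b,c\in\mathbb{F}_q\}$ lies entirely in $V_1$, any two such vertices are automatically non-adjacent and cannot be joined by a path of odd length; in particular there is no path of length $1$ or $3$ between them only if we also rule out length $3$, but parity already forbids lengths $1$ and $3$. So the only thing to exclude is a path of length $2$, i.e.\ a common neighbour; distance at least $4$ then follows because the next admissible even length is $4$.

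First I would fix $a\in\mathbb{F}_q$ and two distinct vertices $(a,b,c)_1$ and $(a,b',c')_1$ with $(b,c)\neq(b',c')$. Suppose they had a common neighbour $(j,y,z)_0\in V_0$. By the defining adjacency in Notation \ref{Bq}, being a neighbour of $(a,b,c)_1$ forces $y=aj+b$ and $z=a^2j+2ab+c$, while being a neighbour of $(a,b',c')_1$ forces $y=aj+b'$ and $z=a^2j+2ab'+c'$. Comparing the second coordinates gives $aj+b=aj+b'$, hence $b=b'$; substituting into the comparison of third coordinates gives $2ab+c=2ab'+c'$, hence $c=c'$. This contradicts $(b,c)\neq(b',c')$, so no common neighbour exists and the two vertices are at distance at least $4$.

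For the second family I would argue identically, fixing $i\in\mathbb{F}_q$ and distinct vertices $(i,j,k)_0$, $(i,j',k')_0$ in $V_0$, and using the equivalent adjacency description from the $V_0$ side (a neighbour of $(i,j,k)_0$ has the form $(w,\,j-wi,\,w^2i-2wj+k)_1$). A putative common neighbour $(w,y,z)_1$ would satisfy $y=j-wi=j'-wi$, forcing $j=j'$, and then $z=w^2i-2wj+k=w^2i-2wj'+k'$, forcing $k=k'$, again a contradiction. Alternatively, since Lemma \ref{HqisoBq} gives an isomorphism $B_q\cong H_q$ that fixes first coordinates, I could transfer the whole statement to $H_q$; but the direct computation is short enough that invoking the isomorphism saves nothing.

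The only genuine subtlety — and the point I would be careful to state explicitly — is the parity observation that both vertex sets sit inside a single part of the bipartition, so odd distances are impossible a priori and it suffices to exclude a common neighbour to jump from ``not at distance $2$'' to ``at distance at least $4$.'' There is no real obstacle in the algebra: each comparison is linear in the relevant unknown and the field being a domain is all that is used. The main thing to get right is simply ensuring that ruling out a common neighbour, together with bipartiteness, genuinely yields distance $\ge 4$ rather than merely $\ge 3$.
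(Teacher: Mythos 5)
Your proof is correct and follows essentially the same route as the paper: rule out a common neighbour by comparing the second and third coordinates of the explicit adjacency formulas, and use bipartiteness to upgrade ``no path of length two'' to ``distance at least four.'' The paper's argument is identical in substance (it leaves the parity observation implicit), so there is nothing to add.
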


\begin{proof}
Suppose that
there exists  a path of length two $(a,b,c )_{1}  (w,j,k)_{0}  (a,b',c' )_{1}$ in $B_q$. By Definition \ref{cage}, $j=aw+b=aw+b'$ and $k=a^2w+2ab+c=a^2w+2ab'+c'$. From both equation we get
$b=b'$ and $c=c'$   which implies that $(a,b,c )_{1}=(a,b',c' )_{1}$ contradicting that the path has length two. Similarly suppose
that there exists a path of length two $(i,j, k)_{0}  (a,b,c )_{1}  (i,j', k')_{0}$.
Reasoning similarly, we obtain   $j=ai+b=j'$, and    $c=a^2i-2aj+k=a^2i-2aj'+k'$ yielding $(i,j, k)_{0}=(i,j', k')_{0}$
which is a contradiction.
\end{proof}

%%%%%%%%%%%%%%%%%%%%%%%%%%%

Figure \ref{spanning} shows a spanning tree of $\Gamma_q$ with the vertices  labelled according to Definition \ref{cage}.

\begin{figure}[h]
  \centering
  % Requires \usepackage{graphicx}
  \includegraphics[width=16cm]{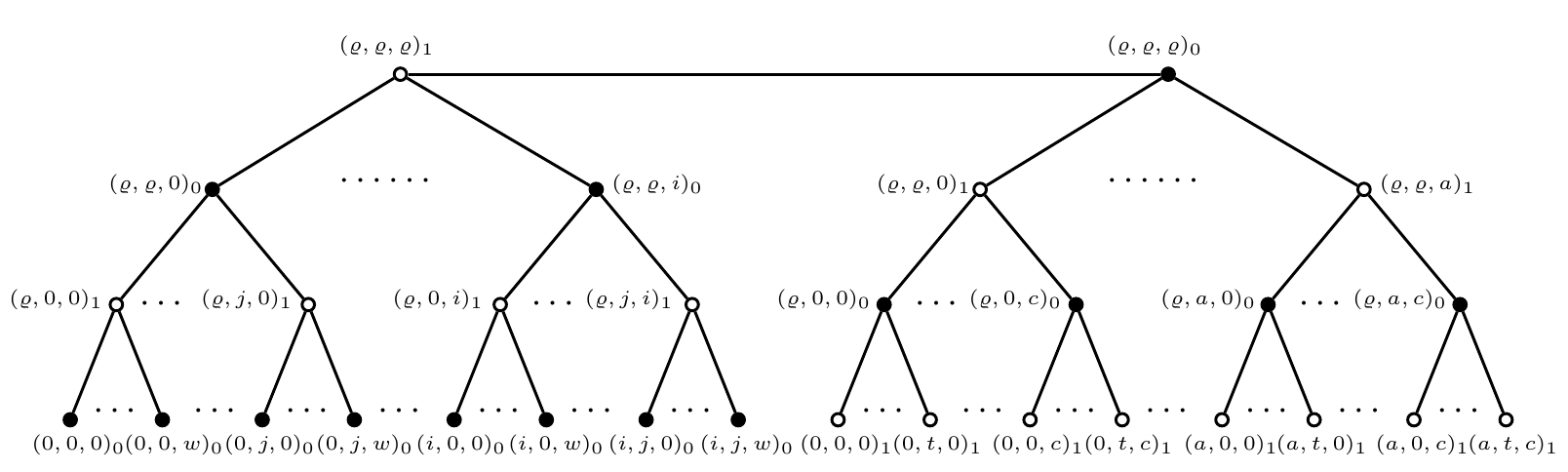}\\
  \caption{Spanning tree of $\Gamma_q$.}\label{spanning}
\end{figure}

%%%%%%%%%%%%%%%%%%%%%%%%%%%%%%%%

\begin{theorem}\label{main8}
The graph $\Gamma_q$ given in Definition \ref{cage}  is a  Moore $(q+1,8)$-graph   for each prime power $q\ge 2$.
\end{theorem}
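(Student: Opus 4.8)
The plan is to establish the three defining properties of a Moore $(q+1,8)$--graph separately: that $\Gamma_q$ is $(q+1)$--regular, that $|V(\Gamma_q)|=2(1+q+q^2+q^3)$, and that its girth is $8$. The first two are routine. Counting the vertices of each part $W_i$ according to the number of entries equal to $\varrho$ gives $q^3+q^2+q+1$, so $|V(\Gamma_q)|=2(1+q+q^2+q^3)$, which is exactly the lower bound \eqref{lower} for $k=q+1$. For regularity I would read off from Definition \ref{cage} that each of the four types of vertex on $W_1$, namely $(a,b,c)_1$ with $a\in\mathbb{F}_q$, $(\varrho,b,c)_1$, $(\varrho,\varrho,c)_1$ and $(\varrho,\varrho,\varrho)_1$, has exactly $q+1$ neighbours ($q$ of one kind and a single extra one), and symmetrically on $W_0$; a preliminary check that the two neighbourhood descriptions in Definition \ref{cage} coincide, i.e. that adjacency is symmetric, then shows that $\Gamma_q$ is a well--defined $(q+1)$--regular bipartite graph.

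For the girth I would first record a structural fact read directly from Definition \ref{cage}. Assign to each vertex its \emph{level} $\ell\in\{0,1,2,3\}$, the number of its coordinates equal to $\varrho$. Then every edge either joins two vertices of level $0$ (these are precisely the edges of $B_q$), or joins the two vertices of level $3$, or changes the level by exactly $1$; moreover each vertex of level $\ell\le 2$ has a \emph{unique} neighbour of level $\ell+1$, although it may have several of level $\ell-1$. Since $B_q\cong H_q$ has girth $8$ by Lemma \ref{HqisoBq}, no short cycle can lie entirely in level $0$, so any $4$-- or $6$--cycle of $\Gamma_q$ must pass through a $\varrho$--vertex and its sequence of levels is a closed walk obeying these transition rules.

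I would then exclude $4$--cycles by proving that any two distinct vertices on the same part have at most one common neighbour, arguing by cases on their levels: when both are at level $0$ the question lives in $B_q$ and Lemma \ref{claim01} gives no common neighbour there, while the only extra common neighbour available in $\Gamma_q$ is a possible shared level--$1$ parent, contributing at most one; in every other case the uniqueness of parents together with the explicit neighbourhoods forces at most one coincidence. To exclude $6$--cycles I would take a vertex $u^\ast$ of maximum level $M$ on the cycle. The case $M=0$ is impossible since the cycle would lie in $B_q$. If $1\le M\le 2$, both cycle--neighbours of $u^\ast$ have level $M-1$ and are children of $u^\ast$, so by Definition \ref{cage} they share their first coordinate and one further coordinate; removing $u^\ast$ leaves a path of length $4$ between them, which would have to lie in $B_q$, and a short computation in $\mathbb{F}_q$ analogous to the proof of Lemma \ref{claim01} shows no such path exists. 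The remaining configurations, including those with $M=3$ that use the two level--$3$ vertices and the edge between them, are finite in number and are eliminated by the same kind of elementary arithmetic.

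I expect the $6$--cycle analysis to be the main obstacle, since, unlike the $4$--cycle case, it does not reduce to a single common--neighbour statement: one must enumerate the admissible level patterns of a closed walk of length $6$ and settle each by field arithmetic. The key quantitative input is a strengthening of Lemma \ref{claim01}, namely that two level--$0$ vertices sharing their first coordinate are in fact at distance at least $6$ in $B_q$; the representative computation solves the second-- and third--coordinate equations of a putative length--$4$ path simultaneously and, using that the two vertices differ, forces them to coincide, a contradiction. Once girth at least $8$ is established the conclusion follows at once: being finite and $(q+1)$--regular, $\Gamma_q$ contains a cycle, so its girth is a finite even number at least $8$; if it were at least $10$ the Moore bound would require strictly more than $2(1+q+q^2+q^3)$ vertices, which is impossible. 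Hence the girth is exactly $8$, and a $(q+1)$--regular graph of girth $8$ attaining the bound \eqref{lower} is a Moore $(q+1,8)$--graph.
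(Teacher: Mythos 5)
Your overall architecture --- vertex count, regularity, and then a reduction of ``girth $\geq 8$'' to distance computations in $\mathbb{F}_q$ organised by the number of $\varrho$--coordinates --- is essentially the paper's, which builds $\Gamma_q$ from $B_q$ by successively coning new $\varrho$--vertices over sets of pairwise--distant vertices and checking one distance claim per stage. The counting, the regularity check, the $4$--cycle exclusion and the final ``girth exactly $8$'' step are all fine. The genuine gap is in your $6$--cycle analysis for $1\le M\le 2$: the assertion that, after deleting the maximum--level vertex $u^\ast$, the remaining path of length $4$ between its two children ``would have to lie in $B_q$'' is false. If $M=2$ the two children have level $1$, so no such path can lie in $B_q$ at all. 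And even when $M=1$ the cycle can contain a \emph{second} level--$1$ vertex: the level pattern $1,0,0,1,0,0$, with the two level--$1$ vertices on opposite sides of the bipartition (say $(\varrho,b,c)_1$ and $(\varrho,a,c')_0$), is consistent with the unique--parent rule, and then the length--$4$ path between the children of either one passes through the other $\varrho$--vertex. These are not degenerate cases; they are exactly where the paper must prove its Claims 3 and 4, namely that $\{(a,t,c)_1: t\in\mathbb{F}_q\}$ is a set of vertices mutually at distance $\geq 6$ in the intermediate graph $B'_q$ that already contains the vertices $(\varrho,b,c)_1$, and that $\{(\varrho,a,j)_0: j\in\mathbb{F}_q\}$ is such a set in $B''_q$. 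The computation in Claim 3 is genuinely different from, and harder than, the single ``key quantitative input'' you budget for (the paper's Claim 1, distance $\geq 6$ in $B_q$ between level--$0$ vertices agreeing in their first two coordinates): it splits into the subcases $a=\varrho$, $a'=\varrho$ and $a,a'\in\mathbb{F}_q$, and the last subcase requires separating $q$ even from $q$ odd and, for odd $q$, the further dichotomy $a+a'=0$ versus $a+a'\neq 0$.

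A smaller but related imprecision: your stated strengthening of Lemma \ref{claim01}, that two level--$0$ vertices sharing their first coordinate are at distance at least $6$ in $B_q$, is false as written --- two vertices of $V_0$ agreeing only in the first coordinate can be at distance $4$. What is true, and what the paper's Claim 1 proves, is the statement for vertices agreeing in the first \emph{two} coordinates, which is what the children of a common $\varrho$--parent actually satisfy; the analogous sets on the other side agree in the first and third coordinates and need a separate computation. Your plan is repairable, but the repair is precisely the paper's incremental construction: order the $\varrho$--vertices by the stage at which they are attached and prove each distance claim in the graph containing all previously attached vertices, not in $B_q$ alone.
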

%For a proof of  Theorem \ref{main8} see the appendix at the end of this paper.

\begin{proof}
As a consequence of Lemma \ref{claim01}, we obtain the following claim.

\noindent\emph{Claim 1: For all $x,y\in\mathbb{F}_q $, the $q$
vertices of the set $ \{(x,y,j)_{0}: j\in
\mathbb{F}_q \}$ are mutually at
distance at least 6 in $B_q$.}

\noindent {\it Proof:}
By Lemma \ref{claim01},   the
  $q$ vertices $\{(x,y,j )_{0}:j\in
\mathbb{F}_q\}$   are mutually at distance at least 4. Suppose by contradiction
that $B_q$ contains the following path of length four:
$$(x,y,j)_{0}  ~(a,b,c )_{1}~  (x',y',j')_{0} ~ (a',b',c' )_{1}  ~(x,y,j'')_{0}, \mbox{ for some } j''\ne j.$$
Then $y=  ax +b=  a' x+b'$ and $y'=   ax'+b  =  a'x'+b' $.
It follows that $  (a-a')(x-x') =0$, which is a contradiction since  by Lemma \ref{claim01}  $a\ne a'$ and $x\ne x'$.  \quad $\Box$

Let $B'_{q}=B'_q[V_{0},V_{1}']$ be the bipartite graph obtained from $B_q=B_q[V_{0},V_{1}]$ by adding $q^2$ new vertices to $V_{1}$ labeled $(\varrho,b,c)_{1}$,  $b,c\in \mathbb{F}_q$ (i.e., $V_{1}'=V_1\cup  \{(\varrho,b,c)_{1}: b,c\in \mathbb{F}_q\}$), and new edges $N_{B'_{q}}((\varrho,b,c)_{1})= \{(c, b ,j )_{0}: j \in \mathbb{F}_q \}$ (see Figure \ref{spanning}).
Then $B'_{q}$ has $|V'_{1}|+|V_{0}|=2q^3+q^2$ vertices such that every vertex of $V_{0}$ has degree $q+1$ and every vertex of $V_{1}'$ has still degree $q$. Note that the girth of $B'_{q}$ is 8 by  Claim 1. Further, Lemma \ref{claim01}    partially holds in $B'_{q}$. We write this fact in the following claim.

\noindent\emph{Claim 2: For any given $a\in \mathbb{F}_q\cup\{\varrho\} $,  the vertices of the set
$\{(a,b,c )_{1}:  b,c \in \mathbb{F}_q \}$ are mutually at distance
at least four in $B'_{q}$.}

  \bigskip
%%%%%%%%%%%%%%%%%%%%%%%%%%%%%%%%%%

\noindent\emph{Claim 3: For all $a\in\mathbb{F}_q\cup \{\varrho\} $ and for all $c\in\mathbb{F}_q $, the $q$
vertices of the set $ \{(a,t,c)_{1}: t\in
\mathbb{F}_q \}$ are mutually at
distance at least 6 in $B'_{q}$.}

\noindent {\it Proof:}
By Claim 2, for all $a\in\mathbb{F}_q \cup \{\varrho\}  $ the $q$ vertices of
 $ \{(a,t, c)_{1}: t\in
\mathbb{F}_q\}$  are  mutually at distance at least 4   in  $B'_{q}$. Suppose
that there exists in $B'_{q}$ the following path of length four:
$$(a,t, c)_{1}  ~(x,y,z)_{0} ~ (a',t',c')_{1} ~ (x',y',z')_{0} ~ (a,t'',c)_{1}, \mbox{ for some } t''\ne t.$$

 \noindent If $a=\varrho$, then $x=x'=c$, $y=t$, $y'=t''$ and $a'\ne \varrho$ by Claim 2.  Then $y=a' x+t'=a' x'+t'=y'$ yielding that $t=t''$ which is a contradiction. Therefore
 $a\ne \varrho$.   If $a'=\varrho$, then $x=x'=c'$ and $y=y'=t'$. Thus $y=a  x+t=ax'+ t''=y'$ yielding that $t=t''$ which is a contradiction. Hence we may assume that $a'\ne \varrho$ and $a\ne a'$ by Claim 2. In this case we have:
  $$
    \begin{array}{ll}
               y=ax+ t &= a'x+t' ;\\
           y'= ax'+t''      &=  a'x'+t'  ;\end{array}
    \begin{array}{ll}
              z=  a^2x +    2a t+c & =  a'^2x +  2a't' +c';\\
               z'=   a^2x' +    2 at''+c & =  a'^2x' +  2a't'+c'.
              \end{array}
              $$
 Hence
 \begin{eqnarray}
  % \nonumber to remove numbering (before each equation)
 \label{dosa} (a-a')(x-x')&=t''-t;\\
 \label{dosb} (a^2-a'^2)(x-x')&= 2a(t''- t).
\end{eqnarray}
If $q$ is even, (\ref{dosb}) leads to  $x=x'$ and (\ref{dosa}) leads to $t''=t$   which is a contradiction with our assumption.
Thus assume $q$ is odd. If $a+a'=0$, then  (\ref{dosb}) gives
              $2a(t''-t)=0$, so that $ a=0$ yielding that $a'=0$ (because $a+a'=0$)
              which is again a contradiction.
If $a+a'\ne 0$, multiplying  equation  (\ref{dosa}) by $a+a'$  and subtracting both equations we obtain  $(2a-(a+a'))(t''-t)=0$. Then   $a=a'$ because $t''\ne t$, which is a
              contradiction to Claim 2.
  Therefore, Claim 3 holds.  \quad $\Box$

Let $B''_{q}=B''_{q}[V'_{0},V_{1}']$ be the graph obtained from $B'_{q}=B'_{q}[V_0,V'_1]$ by adding $q^2+q$ new vertices to $V_{0}$ labeled $(\varrho,a,c)_{0}$, $a\in\mathbb{F}_q\cup \{\varrho\}$, $c\in \mathbb{F}_q$,   and new edges $N_{B''_{q}}((\varrho,a,c)_{0})= \{(a, t ,c )_{1}: t \in \mathbb{F}_q \}$ (see Figure \ref{spanning}).
Then $B''_{q}$ has $|V'_{1}|+|V_{0}'|=2q^3+2q^2+q$ vertices such that every vertex  has degree $q+1$ except the new added vertices which have degree $q$. Moreover the girth of $B''_{q}$ is 8 by  Claim 3.

\emph{Claim 4: For all $a\in\mathbb{F}_q\cup \{\varrho\} $, the $q$
vertices of the set $ \{(\varrho,a,j)_{0}: j\in
\mathbb{F}_q \}$ are mutually at
distance at least 6 in $B''_{q}$.}

\noindent {\it Proof:}
Clearly   these $q$ vertices are  mutually at distance at least 4   in  $B''_{q}$. Suppose
that there exists in $B''_{q}$ the following path of length four:
$$(\varrho,a, j)_{0}~ (a,b,j)_{1}~ (x,y,z)_{0}~ (a,b',j')_{1}~ (\varrho,a,j')_{0}, \mbox{ for some } j'\ne j.$$
If $a=\varrho$ then $x=j=j'$ which is a contradiction.  Therefore $a\ne \varrho$. In this case $y=ax+b=ax+b'$ which implies that $b=b'$. Hence $z=a^2x+2ab+j=a^2x+2ab'+j'$ yielding that $j=j'$ which is again a contradiction. \quad $\Box$

Let $B'''_{q}=B'''_{q}[V_{0}',V''_{1}]$ be the graph obtained from $B''_{q}$ by adding $q+1$ new vertices to $V_{1}'$ labeled $(\varrho,\varrho,a)_{1}$,  $a\in \mathbb{F}_q\cup \{\varrho\}$,    and new edges $N_{B'''_{q}}(\varrho,\varrho,a)_{1}= \{(\varrho,  a,c )_{0}: c\in \mathbb{F}_q \}$, see Figure \ref{spanning}.
Then $B'''_{q}$ has $|V''_{1}|+|V_{0}'|=2q^3+2q^2+2q+1$ vertices such that every vertex  has degree $q+1$ except the new added vertices which have degree $q$. Moreover the girth of $B'''_{q}$ is 8 by  Claim 4 and clearly   these $q+1$ new vertices are mutually at distance 6. Finally,    the   Moore $(q+1,8)$-graph $\Gamma_q$ is obtained by adding to $B'''_{q}$ another new vertex labeled $(\varrho,\varrho,\varrho)_{0}$ and edges  $N_{\Gamma_q}((\varrho,\varrho,\varrho)_{0})= \{(\varrho,\varrho,i)_{1}: i\in \mathbb{F}_q\cup \{\varrho\}\}$.
\end{proof}
%%%%%%%%%%%%%%%%%%%%%%%%%%%%%%%%%%

\begin{remark}\label{equiv}
A coordinatization of classical generalized quadrangles $Q(4,q)$ and $W(q)$  in four dimensions are discussed in  \cite{M98,P70,U90}.
The formulation of a Moore $(q+1,8)$-graph given in Theorem \ref{main8} in three dimensions is equivalent to this coordinatization.
\end{remark}
%%%%%%%%%%%%%%%%%%%%%%%%%%%%%%%%%%%%%%%%%%

%{\bf mettere la seguente notazione in precedenza}
% Also we need the following notation.
% Given a subset of vertices $S\subset V(G)$ we denote by $N_G(S)=\cup_{s\in S}N_{G-S}(s)$ and by $N_G[S]=S\cup N_G(S)$.

\section{Applications}\label{appendix}

In this section we overview results that we have obtained for girth $7$ in \cite{AABLS13, AABLS13EXT} and for girth $8$ in \cite{AABL14}  using the labelling from Definition \ref{cage}. Moreover, we also use the labelling to construct small $(q,8)$--graphs, for $q$ even,  which we match the bound on the order  obtained by \cite{GH08}.

For girth $7$, we have obtained the following results:

\begin{theorem}\label{main1}(\cite[Theorem 3.5]{AABLS13}, \cite[Theorem 2.4]{AABLS13EXT})
Let $q\ge 4$ be an even prime power. Then, there is a $(q+1)$-regular graph of girth 7 and order $2q^3+q^2+2q$.
\end{theorem}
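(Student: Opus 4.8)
The plan is to describe the required graph \emph{explicitly} in the coordinate labelling of Definition \ref{cage}, realising it as a modification of the girth-$8$ scaffolding already assembled in the proof of Theorem \ref{main8}. The feature that lowers the girth from $8$ to the \emph{odd} value $7$ must come from edges joining two vertices \emph{inside a single partition class}; I shall call these \emph{horizontal} edges. For the girth to remain as high as $7$, a horizontal edge may only join two vertices lying at distance exactly $6$ in the underlying bipartite graph, since a pair at distance $2$ or $4$ would produce a triangle or a $5$-cycle. The sets of $q$ vertices shown to be \emph{mutually at distance at least $6$} in Claim 1, Claim 3 and Claim 4 are precisely the admissible pools, and because $q$ is even each such $q$-element pool carries a perfect matching. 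Thus the construction I would use is: start from the bipartite core $B_q$ (isomorphic to $H_q$ by Lemma \ref{HqisoBq}), adjoin a controlled family of $\varrho$-labelled vertices with the bipartite edges inherited from Definition \ref{cage}, and then add horizontal perfect matchings on the classes that remain $q$-regular. Characteristic $2$ is used twice here: it guarantees the $q$-element classes can be perfectly matched, and --- exactly as in the computation behind Claim 3 --- it kills the cross terms $2ab$, which is what keeps the underlying distance estimates valid.

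I would first fix the vertex set and the horizontal matchings so that the order equals $2q^3+q^2+2q$ and every vertex has degree exactly $q+1$; each vertex already receives degree $q$ or $q+1$ from the edges of Definition \ref{cage}, and the horizontal matchings supply the missing unit of degree to the still-$q$-regular classes. This is bookkeeping with the strata of sizes $q^3,\,q^2,\,q$. The bound $\mathrm{girth}\le 7$ is then immediate: a length-$6$ path of $B_q$ joining two matched members of a distance-$6$ pool, completed by the horizontal edge between them, is a $7$-cycle, and such a path exists because the bound of Claim 1 (or Claim 3, Claim 4) is attained with equality for suitably chosen members.

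The substantial step, and the one I expect to be the main obstacle, is the reverse inequality $\mathrm{girth}\ge 7$, namely the exclusion of all cycles of length $3,4,5,6$. I would organise this by the number of horizontal edges a hypothetical short cycle uses. A cycle with \emph{no} horizontal edge lies in the bipartite part, whose girth is $8$, so none of length $\le 6$ occurs; a cycle with \emph{one} horizontal edge is odd and would place its two matched endpoints at bipartite distance $\le 4$, contradicting their membership in a common distance-$\ge 6$ pool. Because the matchings share no vertices, a triangle or a $4$-cycle through two horizontal edges is impossible outright, and the truly delicate configuration is a $6$-cycle formed by two horizontal edges linked by two bipartite paths of length $2$: ruling it out reduces, through Lemma \ref{claim01} and Claims 1--4, to linear relations among the coordinate vectors that fail precisely because $q$ is even. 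Checking this two-matching-edge case together with the analogous $5$-cycle using three horizontal edges, and verifying that the adjoined $\varrho$-vertices do not combine with the matchings to create a short cycle, is the crux of the argument.
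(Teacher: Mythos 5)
First, a point of reference: this paper does not prove Theorem \ref{main1} at all. The statement is imported from \cite{AABLS13} and \cite{AABLS13EXT}, and Section \ref{appendix} only records it as an application of the labelling of Definition \ref{cage}. So your attempt cannot be matched against an argument in the text; it has to stand on its own, and as written it is a plan rather than a proof. Your guiding idea --- keep part of the bipartite scaffolding $B_q\subset B'_q\subset\cdots$ from the proof of Theorem \ref{main8} and restore $(q+1)$-regularity by adding ``horizontal'' perfect matchings inside the pools that Claims 1, 3 and 4 show to be mutually at distance at least $6$ --- is indeed the spirit of the constructions in \cite{AABLS13}, but every step that carries the actual content is left open.

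Concretely: (i) you never fix the vertex set, so the order $2q^3+q^2+2q$ and the regularity are not verified; note that this number does not coincide with any of the intermediate graphs $B'_q$ ($2q^3+q^2$ vertices), $B''_q$ ($2q^3+2q^2+q$) or $B'''_q$, and in $B'_q$ the side $V_0$ already has degree $q+1$, so one cannot simply superimpose matchings on the distance-$6$ pools of $V_0$ without pushing those degrees to $q+2$; some deletion or asymmetric choice of strata is required and this is precisely the bookkeeping you defer. (ii) You never specify the matchings, yet ``a perfect matching exists because $q$ is even'' is not enough: an arbitrary matching inside a distance-$\ge 6$ pool can still combine with a second matching to create short cycles, so the algebraic form of the matching is essential. (iii) The short-cycle analysis is incomplete and in one place wrong: a $4$-cycle using two horizontal edges, one in each partition class, joined by two bipartite edges is \emph{not} ``impossible outright'' --- the two matched pairs being at bipartite distance $\ge 6$ within their own pools does not forbid the two cross adjacencies --- and it must be excluded by the explicit choice of matchings, just like the $6$-cycle with two horizontal edges and the $5$-cycle with three, which you yourself identify as the crux and do not carry out. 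Until the vertex set, the matchings, and these exclusions are made explicit, the theorem is not proved.
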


\begin{theorem}\label{main2}(\cite[Theorem 4.7]{AABLS13}, \cite[Theorem 3.4]{AABLS13EXT})
Let $q\ge 5$ be an odd prime power. Then, there is a $(q+1)$-regular graph of girth 7 and order $2q^3+2q^2-q+1$.
\end{theorem}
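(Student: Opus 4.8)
The plan is to realise the desired graph as a controlled extension of the intermediate graph $B'_q$ that already appears in the proof of Theorem \ref{main8}, rather than as a fresh object. Recall that $B'_q=B'_q[V_0,V_1']$ has order $2q^3+q^2$ and girth $8$, that every vertex of $V_0$ has degree $q+1$, and that every vertex of $V_1'=V_1\cup\{(\varrho,b,c)_1:b,c\in\mathbb{F}_q\}$ has degree exactly $q$. Thus to produce a $(q+1)$-regular graph I only have to raise by one the degree of each of the $q^3+q^2$ vertices on the $1$-side, and the order of the result will be $2q^3+q^2+(\text{number of new vertices})$. The whole difficulty is to do this raising so as to force the girth down from $8$ to exactly $7$ while keeping it from dropping below $7$.

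The arithmetic identity $q^3+1=(q+1)(q^2-q+1)$ dictates the shape of the construction. First I would single out one infinity-vertex, say $(\varrho,b_0,c_0)_1$, and adjoin to the $0$-side exactly $q^2-q+1$ new ``point'' vertices, each of degree $q+1$, whose neighbourhoods partition the $q^3+1$ vertices $V_1\cup\{(\varrho,b_0,c_0)_1\}$ into blocks of size $q+1$. Each such point raises to $q+1$ the degree of the $q+1$ vertices of its block. The blocks are to be prescribed by an explicit field rule in the coordinates of Definition \ref{cage}, chosen so that two $1$-side vertices sharing a new point never share an old $0$-neighbour; because the blocks partition the covered vertices, this single condition already rules out all $4$-cycles through the new points. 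This step consumes exactly $q^2-q+1$ additional vertices, giving order $2q^3+2q^2-q+1$, and leaves precisely the $q^2-1$ vertices $\{(\varrho,b,c)_1\}\setminus\{(\varrho,b_0,c_0)_1\}$ still of degree $q$.

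Here the hypothesis that $q$ is \emph{odd} enters for the first time: since $q$ is odd, $q^2-1$ is even, so the remaining $q^2-1$ vertices can be paired off by a perfect matching of $(q^2-1)/2$ edges added \emph{inside} the $1$-side, lifting both endpoints of each edge to degree $q+1$ and completing the $(q+1)$-regularity. These intra-part edges are exactly the mechanism that makes the graph non-bipartite and introduces odd cycles. I would prescribe the matching by a further explicit field rule so that the two vertices of every matched pair lie at distance at least $6$ from one another in the bipartite graph preceding the matching step (that is, $B'_q$ together with the new points), with at least one pair at distance exactly $6$. Then the shortest odd cycle through a matching edge has length $1+6=7$, and a $7$-cycle of this form is exhibited directly, which is what pins the girth at $7$ rather than leaving it larger.

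The bulk of the work, and the main obstacle, is the girth verification. I would argue exactly as in Claims 1--4 of Theorem \ref{main8}: assume a cycle of length $3,4,5$ or $6$ and derive a contradiction from the defining relations $y=aw+b$, $z=a^2w+2ab+c$ together with the new block-rule and matching-rule. Even cycles using only the original bipartite edges are impossible because $B'_q$ has girth $8$; every remaining short cycle must traverse a new point or a matching edge, and the odd ones must traverse an odd number of matching edges. In the decisive cases the analysis splits on the characteristic of $\mathbb{F}_q$ precisely as in equations (\ref{dosa})--(\ref{dosb}): the assumption $q$ odd is what lets one cancel the factor $2a$ or $a+a'$ in characteristic different from $2$ and thereby rule out triangles, $5$-cycles and the remaining short configurations. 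Establishing that the explicit block- and matching-rules can be chosen simultaneously to satisfy all of these distance constraints is the crux on which the whole argument turns; once that is in place, the order count $2q^3+2q^2-q+1$ and the $(q+1)$-regularity follow immediately from the bookkeeping above.
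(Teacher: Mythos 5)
There is a genuine gap, and you have in fact named it yourself: the two ``explicit field rules'' on which everything rests --- the partition of the $q^3+1$ vertices $V_1\cup\{(\varrho,b_0,c_0)_1\}$ into $q^2-q+1$ blocks of size $q+1$ that are pairwise at distance at least $6$ in $B'_q$, and the perfect matching on the remaining $q^2-1$ vertices with the required distance properties --- are never exhibited, and their existence is precisely the content of the theorem, not bookkeeping around it. Your arithmetic ($q^3+1=(q+1)(q^2-q+1)$, order $2q^3+q^2+(q^2-q+1)=2q^3+2q^2-q+1$, parity of $q^2-1$ for the matching, cycle length $1+6=7$) is correct, but it only shows the numbers are consistent with such a configuration, not that one exists. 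Worse, there is concrete evidence that blocks of the shape you propose are problematic: the distance-$\geq 6$ sets actually available in $B'_q$ (the rows of Claim 3, $\{(a,t,c)_1: t\in\mathbb{F}_q\}$) have size $q$, not $q+1$; no row can be extended by a $\varrho$-vertex, since for any $(\varrho,b,c')_1$ the vertex $(c',\,b,\,a^2c'+2at+c)_0$ with $t=b-ac'$ is a common neighbour of $(\varrho,b,c')_1$ and the row vertex $(a,b-ac',c)_1$, so every $\varrho$-vertex is at distance $2$ from some member of every field row; and the natural size-$(q+1)$ candidates $N_{\Gamma_q}(x)$ fail too, because for $x\in V_0$ the centre $x$ survives in $B'_q$ (creating $4$-cycles through any new point glued onto $N_{\Gamma_q}(x)$), while for $x=(\varrho,a,c)_0$ the set contains $(\varrho,\varrho,a)_1$, which lies outside the vertex set you want to cover. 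So the asserted block rule has no evident instantiation, and your plan cannot be completed as stated without a substantially new idea. In addition, your girth analysis omits one cycle shape that Claims 1--4 style arguments do not cover: a $6$-cycle through \emph{two} matching edges $u_1v_1$, $u_2v_2$ with $d(v_1,u_2)=d(v_2,u_1)=2$; requiring each matched pair to be at distance $\geq 6$ does not exclude this, so the matching rule needs cross-pair conditions you never state.

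For comparison: this paper gives no proof of Theorem \ref{main2} at all --- it is quoted from \cite{AABLS13,AABLS13EXT}, where the construction runs in the opposite direction to yours. There one starts from the whole cage $\Gamma_q$ (order $2q^3+2q^2+2q+2$), deletes a carefully chosen set of $3q+1$ vertices, and restores $(q+1)$-regularity by adding edges between the deficient vertices, all specified explicitly in the coordinates of Definition \ref{cage}; the girth-$7$ verification is then a finite case analysis against those explicit sets. Numerically your bottom-up extension of $B'_q$ is the complementary view of that top-down pruning ($2q^3+2q^2+2q+2-(3q+1)=2q^3+2q^2-q+1$), and the idea of same-side edges at bipartite distance $6$ creating the $7$-cycles is the right mechanism; but the deletion approach inherits all its distance estimates from the girth-$8$ cage and never needs the problematic size-$(q+1)$ distance-$6$ blocks, which is exactly where your version breaks down.
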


A subset $U\subset V(G)$ is said to be \emph{perfect dominating set} of $G$ if for each vertex $x\in V(G)\setminus U$, $|N_G(x)\cap U|=1$ (cf. \cite{HHS98}).  Note that if $G$ is a $k$-regular graph and $U$ is a perfect dominating set of $G$ then $G-U$ is clearly a $(k-1)$-regular graph.
In \cite{AABL14}, using the labelling from Definition \ref{cage}, we have obtained perfect dominating sets of $(q+1,8)$--cages and $(q,8)$--graphs, for $q$ a prime power, that give rise to the following two theorems:

%For girth $8$ the results in \cite{AABL14} give rise to the following two theorems:
\begin{theorem}\label{main3}
Let $q\ge 4$ be a  prime power. Then, there is a $q$-regular graph of girth 8 and order $2q(q^2-2)$.
\end{theorem}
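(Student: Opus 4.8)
The plan is to realise the required graph as $\Gamma_q-U$ for a suitable perfect dominating set $U$ of the Moore $(q+1,8)$-graph $\Gamma_q$ of Theorem \ref{main8}. As recalled in the discussion preceding the statement, if $U$ is a perfect dominating set of the $(q+1)$-regular graph $\Gamma_q$, then $\Gamma_q-U$ is $q$-regular. Since $\Gamma_q$ is bipartite of girth $8$, the induced subgraph $\Gamma_q-U$ is bipartite, so its girth is even and at least $8$. If that girth were at least $10$, then $\Gamma_q-U$, being $q$-regular with $q\ge 4$ and hence not a forest, would contain a component of order at least the Moore bound $2(1+(q-1)+(q-1)^2+(q-1)^3+(q-1)^4)$, which exceeds $2q^3$; this is impossible since $|V(\Gamma_q-U)|\le|V(\Gamma_q)|=2q^3+2q^2+2q+2$. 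Hence the girth is exactly $8$. Because $|V(\Gamma_q)|=n_0(q+1,8)=2q^3+2q^2+2q+2$, to attain order $2q(q^2-2)=2q^3-4q$ I must exhibit a perfect dominating set with exactly $|U|=2q^2+6q+2$ vertices.

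The core of the argument is thus the explicit construction of such a $U=U_0\cup U_1$, with $U_r\subseteq W_r$, directly in the labelling of Definition \ref{cage}, together with the verification that every vertex outside $U$ has a unique neighbour in $U$. In the bipartite setting this splits into two symmetric requirements: each vertex of $W_1\setminus U_1$ has exactly one neighbour in $U_0$, and each vertex of $W_0\setminus U_0$ has exactly one neighbour in $U_1$. A useful first landmark is the perfect dominating set consisting of all vertices ``at infinity'', i.e. all vertices whose first coordinate is $\varrho$ on both sides: the adjacency rules show that each finite vertex $(x,y,z)_0$ has the single $\varrho$-leading neighbour $(\varrho,y,x)_1$, and dually, so this set is perfectly dominating, of size $2(q^2+q+1)$, and $\Gamma_q$ minus it is exactly $B_q\cong H_q$ (Lemma \ref{HqisoBq}), of order $2q^3$. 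To reach order $2q^3-4q$ one needs a perfect dominating set that is larger by precisely $4q$ vertices.

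I would obtain such a set by a controlled exchange starting from this set at infinity. The finite points of $W_0$ are partitioned into the $q^2$ lines $L_{b,c}=\{(c,b,z)_0:z\in\mathbb{F}_q\}$, each dominated by the single vertex $(\varrho,b,c)_1$, with a dual partition on the other side. Removing a vertex $(\varrho,b,c)_1$ from the set and instead inserting a structured selection of finite vertices of $L_{b,c}$ repairs the domination of those points, while threatening to over-dominate the finite vertices of $W_1$ adjacent to the inserted points; the field arithmetic governing the linear neighbourhoods $(w,\,aw+b,\,a^2w+2ab+c)_0$ is exactly what allows these exchanges to be organised so that each insertion on one side is compensated by a deletion on the opposite side, keeping perfect domination intact while producing a net increase of $4q$ in $|U|$. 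The main obstacle is precisely to choose these exchanged families so that both domination conditions remain exact simultaneously, and I expect this verification to require a case analysis on the vertex types together with a parity split between $q$ even and $q$ odd, mirroring the equations that appear in the proof of Theorem \ref{main8}.

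Finally, once $U$ is shown to be perfectly dominating with $|U|=2q^2+6q+2$, the graph $\Gamma_q-U$ is $q$-regular, has girth $8$ by the argument of the first paragraph, and has order $|V(\Gamma_q)|-|U|=(2q^3+2q^2+2q+2)-(2q^2+6q+2)=2q^3-4q=2q(q^2-2)$, which establishes the theorem.
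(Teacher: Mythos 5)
Your reduction of the theorem to exhibiting a perfect dominating set $U$ of $\Gamma_q$ with $|U|=2q^2+6q+2$ is sound: the arithmetic $|V(\Gamma_q)|-|U|=(2q^3+2q^2+2q+2)-(2q^2+6q+2)=2q(q^2-2)$ is right, $q$-regularity of $\Gamma_q-U$ follows from the definition of perfect domination, and your Moore-bound argument correctly forces the girth of the bipartite graph $\Gamma_q-U$ to be exactly $8$. Your ``landmark'' observation is also correct: the set of all vertices with first coordinate $\varrho$ is a perfect dominating set of size $2(q^2+q+1)$ whose removal leaves $B_q\cong H_q$.

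However, the proof has a genuine gap at its core: you never actually construct the perfect dominating set of cardinality $2q^2+6q+2$, nor verify that any candidate is perfectly dominating. The third paragraph is a plan, not an argument --- you yourself write that ``the main obstacle is precisely to choose these exchanged families so that both domination conditions remain exact simultaneously,'' and that obstacle is the entire content of the theorem. The exchange you sketch is genuinely delicate: deleting $(\varrho,b,c)_1$ from the infinity set leaves the whole line $L_{b,c}$ undominated, while inserting finite vertices of $L_{b,c}$ over-dominates both $(\varrho,b,c)_1$ itself and the $W_1$-neighbours of the inserted vertices, so compensating deletions cascade and must be closed off consistently; nothing in your sketch shows this terminates with a net gain of exactly $4q$ vertices. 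For calibration, the paper does not prove Theorem \ref{main3} here at all (it cites \cite{AABL14}), but its fully worked analogue, Proposition \ref{perfect2}, shows what such a verification demands: an explicit set of the form $N_{\Gamma_q}[Q]\cup I_Q\cup N_{\Gamma_q}[S]\cup I_S$ built from two carefully chosen $(q+1)$-sets $Q$ and $S$, a computation of the common second neighbourhoods via the field equations, an explicit matching between $D_Q\cap V_1$ and $D_S\cap V_0$, and a final count showing every outside vertex has exactly one neighbour inside. Until you supply a concrete $U$ and that level of verification, the statement is not proved.
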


\begin{theorem}\label{main4}
Let $q\ge 4$ be a  prime power. Then, there is a $(q-1)$-regular graph of girth 8 and order $2q(q-1)^2$.
\end{theorem}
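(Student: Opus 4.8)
The plan is to realize the desired $(q-1)$-regular graph by deleting a suitable vertex set from the Moore $(q+1,8)$-graph $\Gamma_q$ of Definition~\ref{cage}, exploiting the observation recorded before Theorem~\ref{main3}: if $G$ is $k$-regular and $U\subset V(G)$ is a perfect dominating set, then $G-U$ is $(k-1)$-regular, and being a subgraph it has girth at least $8$. Since the degree must drop from $q+1$ to $q-1$, I would proceed in two stages. First, exhibit a perfect dominating set $U_1$ of $\Gamma_q$ whose deletion yields the $q$-regular graph of Theorem~\ref{main3}. Then, inside that $q$-regular graph $\Gamma_q-U_1$, exhibit a second perfect dominating set $U_2$, so that $(\Gamma_q-U_1)-U_2$ is $(q-1)$-regular of girth at least $8$.

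The construction of $U_1$ and $U_2$ would be made explicit through the coordinate labelling. Each dominating set is described by fixing or linearly constraining one or more of the three coordinates, together with a prescription on the $\varrho$-vertices, so that membership is decided by a simple algebraic condition. The perfect domination property then amounts to showing that, for every vertex $x$ outside the set, the system of linear equations expressing that $x$ is adjacent to a vertex of the set has exactly one solution over $\mathbb{F}_q$. These are precisely the relations $y=aw+b$, $z=a^2w+2ab+c$ that already govern adjacency in Definition~\ref{cage} and that were handled in the proof of Theorem~\ref{main8}; here one checks \emph{uniqueness} of the solution rather than the absence of short cycles.

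Once $U_1$ and $U_2$ are in hand, the order is obtained by counting. Deleting a perfect dominating set removes exactly $|U_i|$ vertices, so the final order is $|V(\Gamma_q)|-|U_1|-|U_2|$, which I would verify equals $2q(q-1)^2$ by bookkeeping the contribution of each vertex type, namely the $\mathbb{F}_q^3$ vertices together with the $(\varrho,b,c)$, $(\varrho,\varrho,c)$ and $(\varrho,\varrho,\varrho)$ vertices in both parts. For the girth, the subgraph relation already gives girth at least $8$; to secure equality it suffices to exhibit a single $8$-cycle of $\Gamma_q$ all of whose vertices avoid $U_1\cup U_2$.

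The main obstacle is the verification of the perfect domination property itself, uniformly over the heterogeneous vertex set of $\Gamma_q$. Because the symbol $\varrho$ admits no arithmetic, the adjacency rules split into several regimes according to whether the first coordinate is finite or $\varrho$ and which of the successive special vertices is involved, and the ``exactly one neighbour in $U$'' condition must be checked separately in each regime, as in the branching arguments of Claims~1--4 in the proof of Theorem~\ref{main8}. The second stage is the more delicate one, since $U_2$ must be a perfect dominating set of the already-truncated graph $\Gamma_q-U_1$ rather than of $\Gamma_q$ itself; consequently the relevant neighbourhoods are the reduced ones, and keeping track of which adjacencies survive the first deletion is where most of the care is required.
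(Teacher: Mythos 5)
Your high-level strategy matches the one the paper attributes to this result: Theorem~\ref{main4} is stated here without proof and is credited to \cite{AABL14}, where it is obtained exactly as you describe, by locating perfect dominating sets (of the $(q+1,8)$-cage and then of the resulting $(q,8)$-graph) via the coordinate labelling of Definition~\ref{cage} and using the observation that deleting a perfect dominating set from a $k$-regular graph leaves a $(k-1)$-regular subgraph of girth at least $8$. So the framework is the right one.

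However, as written your proposal has a genuine gap: it never exhibits the sets $U_1$ and $U_2$. Every substantive step is deferred --- ``the construction \ldots would be made explicit,'' ``I would verify,'' ``one checks uniqueness.'' The entire content of the theorem lies precisely in producing explicit sets with the perfect domination property and the right cardinalities, and in carrying out the case analysis over the finite and $\varrho$-type vertices; compare Proposition~\ref{perfect2}, where the single first-stage set for even $q$ already requires a full page of algebra over $\mathbb{F}_q$. Note also that the numbers make the second stage far from routine: passing from order $2q(q^2-2)$ to $2q(q-1)^2$ forces $|U_2|=4q^2-6q$, a set of size $\Theta(q^2)$ that must perfectly dominate the \emph{truncated} graph $\Gamma_q-U_1$, whose neighbourhoods are no longer given directly by Definition~\ref{cage}. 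Without explicit candidates for $U_1$ and $U_2$ there is nothing to verify, so the proposal is a plan rather than a proof.
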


The labelling from Definition \ref{cage} also allows us to improve the result in Theorem \ref{main3} for even $q \ge 4$.

\begin{proposition}\label{perfect2}Let $q\ge 4$ be an even  prime power  and   $\Gamma_q=\Gamma_q[V_{0},V_{1}]$ the Moore $(q+1, 8)$-graph given in Definition \ref{cage}.
 Let $Q=\{(\varrho,j,0)_{0}: j\in \mathbb{F}_q\}\cup \{(\varrho,\varrho,0)_{0}\}$ and $S=\{(u,u,1+u+u^2)_{1}: u\in \mathbb{F}_q\}\cup \{(\varrho,1,1)_{1}\}$. Then the set
 $$ N_{\Gamma_q}[Q]\cup \left(\bigcap_{a\in Q}N_{\Gamma_q}^2(a)\right)\cup N_{\Gamma_q}[S]\cup \left(\bigcap_{b\in S}N_{\Gamma_q}^2(b)\right)$$
 is a
  perfect dominating set of  $\Gamma_q$ of cardinality $ 2(q^2+4q+3)$ (see Figure \ref{estruc2}).

  \end{proposition}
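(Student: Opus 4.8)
The plan is to verify directly that the proposed set $D$ satisfies the defining property of a perfect dominating set: every vertex $x \notin D$ has exactly one neighbour in $D$. I would first unpack the structure of $D$. The set $N_{\Gamma_q}[Q]$ consists of $Q$ together with all neighbours of vertices of $Q$; since each vertex of $Q$ has degree $q+1$ and the vertices of $Q$ lie at mutual distance $6$ (this follows from Claim 4 in the proof of Theorem \ref{main8}, which shows the set $\{(\varrho,a,j)_0 : j \in \mathbb{F}_q\}$ is spread out, together with the analogous statement for the apex vertex), the closed neighbourhoods are disjoint and $N_{\Gamma_q}[Q]$ is a union of $q+1$ disjoint stars. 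The term $\bigcap_{a \in Q} N_{\Gamma_q}^2(a)$ picks out the common vertices at distance exactly $2$ from all of $Q$; because $Q$ is essentially a "book" of vertices sharing a common structure through the $\varrho$-labels, I would compute this intersection explicitly from Definition \ref{cage} and expect it to be a small, easily described set on the $W_1$ side. The same analysis applies symmetrically to $S$, with $S$ being a set of $q+1$ vertices on the $W_1$ side.

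Next I would set up a careful bookkeeping of which vertices of $\Gamma_q$ are covered. The key observation is that $\Gamma_q$ is a Moore $(q+1,8)$-graph, hence bipartite of diameter $4$ with a very rigid distance-partition structure: from any fixed vertex the numbers of vertices at distances $0,1,2,3,4$ are completely determined. I would use this to argue that the parts $N_{\Gamma_q}[Q]$, $\bigcap_{a\in Q} N_{\Gamma_q}^2(a)$, $N_{\Gamma_q}[S]$ and $\bigcap_{b \in S} N_{\Gamma_q}^2(b)$ are pairwise disjoint, so that the cardinality count $2(q^2+4q+3)$ follows by adding the sizes of the four pieces (each star contributes $q+2$, and the two intersection terms contribute the remaining amount). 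The arithmetic $|N_{\Gamma_q}[Q]| = (q+1)(q+2)$ is immediate; the subtle part of the count is the exact size of each distance-$2$ intersection.

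The heart of the argument is checking the \emph{exactly one} condition. For a vertex $x \notin D$, I would split into cases according to which bipartition class $x$ lies in and its coordinate type ($\mathbb{F}_q^3$, or one of the $\varrho$-types). Using the explicit adjacency formulas of Definition \ref{cage}, I would count $|N_{\Gamma_q}(x) \cap D|$ in each case. The role of the specific algebraic choices in $S$ — namely the expression $1+u+u^2$ and the requirement that $q$ be \emph{even} — is exactly to make these counts come out to $1$ uniformly. The hypothesis that $q$ is even is what forces the relevant quadratic $u^2+u+c$ behaviour (each element of $\mathbb{F}_q$ of a given trace is hit the right number of times, and $u \mapsto u^2$ is a bijection in characteristic $2$), so that the covering by $S$ complements the covering by $Q$ without overlap or gap. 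I anticipate that the main obstacle is precisely this case analysis: one must show that the dominating contributions from the $Q$-side and the $S$-side fit together perfectly, with no vertex of $\Gamma_q \setminus D$ receiving two neighbours in $D$ and none receiving zero. Verifying the no-double-domination for vertices lying "between" the $Q$-cluster and the $S$-cluster, where a single vertex could a priori see one neighbour from each side, is the delicate step; here I would invoke the girth-$8$ and diameter-$4$ constraints to rule out the short configurations that would create a double cover, reducing the problem to a finite set of coordinate identities solvable by the even-characteristic arithmetic just described.
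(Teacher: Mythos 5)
Your overall strategy --- compute the four pieces explicitly, show they are pairwise disjoint, then verify that every outside vertex has exactly one neighbour in the union --- has the same shape as the paper's argument, but as written it is a plan rather than a proof, and it omits the three ingredients that actually carry the paper's proof. First, you never compute the two intersection terms. The set $\bigcap_{a\in Q}N^2_{\Gamma_q}(a)$ is easy (it is $\{(0,t,0)_{0}: t\in\mathbb{F}_q\}\cup\{(\varrho,\varrho,\varrho)_{0}\}$), but $\bigcap_{b\in S}N^2_{\Gamma_q}(b)$ requires solving a genuine system of coordinate equations in characteristic $2$; the paper shows it equals $\{(u,1+u,p(u))_{1}: u\in\mathbb{F}_q\}\cup\{(\varrho,0,1)_{1}\}$ with $p(u)=1+u+u^2$, and this computation is exactly where the choice $1+u+u^2$ and the evenness of $q$ do their work --- not in a trace/bijectivity argument of the kind you gesture at. Without these computations you cannot even establish the cardinality $2(q^2+4q+3)$.

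Second, and more importantly, the step you yourself flag as delicate --- that no vertex outside $D$ sees two neighbours in $D$ --- is settled in the paper not by a case analysis over outside vertices but by exhibiting an explicit matching of $\Gamma_q$ joining each vertex of $D_Q\cap V_1$ to a vertex of $D_S\cap V_0$ (for instance $(a,a+t,0)_{1}$ is matched to a neighbour of $(u,u,p(u))_{1}$ with $u=a+(t+1)^{-1}p(a)$). This matching shows that every vertex of the subgraph $H$ induced on $D$ has degree $3$ or $q+1$ in $H$, whence the number of edges leaving $D$ is exactly $2(q-2)(q+1)^2=|V(\Gamma_q)\setminus D|$; girth $8$ (no $4$- or $6$-cycles) then forces these edges to land on distinct vertices, so each outside vertex receives exactly one. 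Your proposal contains no substitute for this matching, and without it the edge count from $D$ to its complement does not close up: the star-leaves of the $Q$-side would send one edge too many out of $D$ and the ``exactly one'' conclusion would fail. Finally, the case $q=4$ is genuinely exceptional: there $p(u)\in\{0,1\}$ for all $u\in\mathbb{F}_4$, so $D_Q$ and $D_S$ are not disjoint and the set as literally written does not work; the paper must replace $Q$ by $Q'=\{(\varrho,j,x)_{0}: j\in\mathbb{F}_4\}\cup\{(\varrho,\varrho,0)_{0}\}$ with $x\notin\{0,1\}$. Your argument would break at $q=4$ without noticing.
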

\begin{figure}[h]
  \centering
  % Requires \usepackage{graphicx}
  \includegraphics[width=\textwidth]{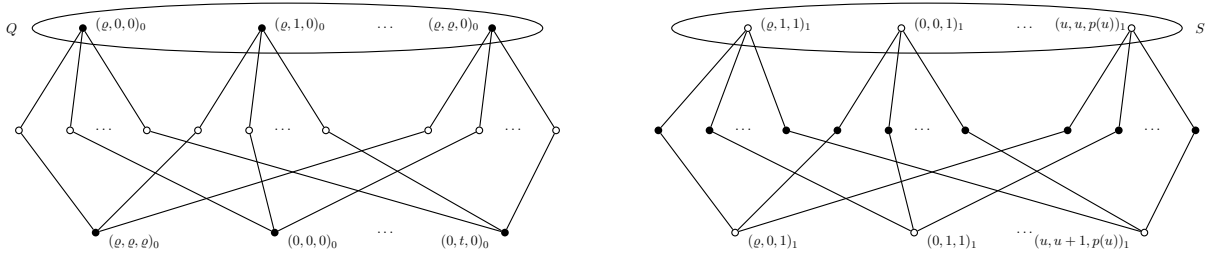}\\
  \caption{Perfect dominating set   of Theorem
\ref{perfect2}.}\label{estruc2}
\end{figure}

\begin{proof} First assume that $q\ge 8$.
By Definition \ref{cage}, we have for all element of $ Q$:

$N_{\Gamma_q}((\varrho,j,0)_{0})=\{(j,t,0)_{1}:t\in \mathbb{F}_q\}\cup \{(\varrho,\varrho,j)_{1}\}$;

$N_{\Gamma_q}((\varrho,\varrho,0)_{0})=\{(\varrho,t,0)_{1}:t\in \mathbb{F}_q\}\cup \{(\varrho,\varrho,\varrho)_{1}\}$.

Then $(\varrho,\varrho,\varrho)_0\in N_{\Gamma_q}^2((\varrho,j,0)_{0}))\cap  N_{\Gamma_q}^2((\varrho,\varrho,0)_{0}))$ for all $j\in \mathbb{F}_q$. Moreover, since $q$ is even, $2tj=0$ and $N_{\Gamma_q}((j,t,0)_{1})=\{(w,jw+t,j^2w)_0:w\in \mathbb{F}_q\}\cup \{(\varrho,j,0)_{0}\}$. Thus, for all $j_1,j_2\in \mathbb{F}_q$, $j_1\ne j_2$, we have $(w,j_1w+t_1,j_1^2w)_0=(w,j_2w+t_2,j_2^2w)_0$ if and only if $w=0$ and $t_1=t_2$. Let $I_Q=\bigcap_{a\in Q}N_{\Gamma_q}^2(a)$.  Then $I_Q =\{(0,t,0)_{0}: t\in \mathbb{F}_q\}\cup \{(\varrho,\varrho,\varrho)_{0}\}$ (see left side of Figure \ref{estruc2}) implying that $|N_{\Gamma_q}[Q]|+|I_Q|=(q+1)^2+2(q+1)$.

Let $p(u)=1+u+u^2$ and observe that $j=-j$ for all $j\in \mathbb{F}_q$ since $q$ is even.
By Definition \ref{cage}, we have for all element of $S$:

 $N_{\Gamma_q}  ( (u,u,p(u) )_{1}  \}=\{(a,ua+u,u^2a+p(u))_0: a\in \mathbb{F}_q\} \cup \{(\varrho,u,p(u))_0\}$;

  $N_{\Gamma_q}((\varrho,1,1)_{1})=\{(1,1,a)_{0}:a\in \mathbb{F}_q\}\cup \{(\varrho,\varrho,1)_{0}\}$;

  $N_{\Gamma_q}((a,ua+u,u^2a+p(u))_0)=\{ (w,ua+u+wa,w^2a+u^2a+p(u) )_{1}  :w\in \mathbb{F}_q \}\cup \{   (\varrho,ua+u,a)_{1}\} $.

   $N_{\Gamma_q}((\varrho,u,p(u))_{0})=\{(u,y,p(u))_{1}:y\in \mathbb{F}_q\}\cup \{(\varrho,\varrho,u)_{1}\}$;

  Then $(\varrho,0,1)_1\in N_{\Gamma_q}((\varrho,\varrho,1)_{0}))\cap  N_{\Gamma_q}((1,0,1+u)_{0}))$ for all $u\in \mathbb{F}_q$. Let $I_S=\bigcap_{b\in S}N_{\Gamma_q}^2(b)$, we have $(\varrho,0,1)_1\in I_S $ (see right side of Figure \ref{estruc2}). Moreover, two vertices $(a_1, a_1u_1+u_1, a_1u_1^2+p(u_1))_0$ and $(a_2, a_2u_2+u_2, a_2u_2^2+p(u_2))_0$ for $u_1,u_2,a_1,a_2\in \mathbb{F}_q$ such that $u_1\ne u_2$ and $a_1,a_2\ne 1$ have a common neighbor $(w, x , y)_1$ for some $w,x,y\in \mathbb{F}_q$ if and only if
  \begin{equation}\label{x} x= u_1a_1+u_1+wa_1 =u_2a_2+u_2+wa_2 \end{equation} and
  \begin{equation}\label{y} y=u_1^2a_1+w^2a_1+p(u_1) =u_2^2a_2+w^2a_2+p(u_2) .\end{equation}
But (\ref{x}) holds if and only if
   \begin{equation}\label{xx} x+w=(u_1+w)(a_1+1) =(u_2+w)(a_2+1).\end{equation}   Moreover, (\ref{y}) holds if and only if
   \begin{equation} \label{yy} y+w^2+1=(u_1^2+w^2)(a_1+1)+u_1 =(u_2^2+w^2)(a_2+1)+u_2. \end{equation}
Since $q$ is even $(u_i^2+w^2)=(u_i+w)^2$. Therefore
  multiplying (\ref{xx}) by $u_1+w$ and combining it with (\ref{yy}) we get $u_1=(u_2+w)(a_2+1)(u_1+u_2)+u_2$. But $u_1\ne u_2$ gives that $(u_2+w)(a_2+1)=1$ as well as $(u_1+w)(a_1+1)=1$ by (\ref{xx}). Hence,   $x=1+w$ by (\ref{xx})  and  $y=1+w+w^2=p(w)$ by (\ref{yy}).
We conclude that $I_S=\{(u,1+u,p(u))_1:u\in \mathbb{F}_q\}\cup \{(\varrho,0,1)_1\}$
since $(u,1+u,p(u))_1\in N_{\Gamma_q}((\varrho,u,p(u))_{0})$. Thus, $|N_{\Gamma_q}[S]|+|I_S|=(q+1)^2+2(q+1)$.

Let $D_Q=N_{\Gamma_q}[Q]\cup I_Q$ and $D_S=N_{\Gamma_q}[S]\cup I_S$. We get that
$|D_Q\cup D_S|=2(q+1)^2+4(q+1)=2(q^2+4q+3)$ since $D_Q$ and $D_S$ are vertex disjoint for $q\ge 8$.

Let us show that $D_Q\cup D_S$ is a perfect dominating set of $\Gamma_q$.

First, we check that  by Definition \ref{cage}, there is a matching joining each vertex of $  D_Q\cap V_1$ with one vertex in $D_S\cap V_0$. We have:

\begin{itemize}
\item[-] For all $u\in\mathbb{F}_q$, $(\varrho, \varrho, u)_1\in D_Q$ is adjacent to $ (\varrho,   u, p(u))_0\in D_S$, and $(\varrho, \varrho, \varrho)_1\in D_Q$ is adjacent to $ (\varrho,  \varrho, 1)_0\in D_S$.

\item[-] For all $t\in\mathbb{F}_q$, $(\varrho, t, 0)_1\in D_Q$ is adjacent to $ (0, t, p(t))_0\in D_S$.

\item[-] For all $a\in\mathbb{F}_q$, $(a, a, 0)_1\in D_Q$ is adjacent to $ (1, 0, a^2)_0\in N_{\Gamma_q}(\varrho, 0,1)_1\subset D_S$.

\item[-] For all $a\in\mathbb{F}_q$, $(a, a+1, 0)_1\in D_Q$ is adjacent to $ (1, 1, a^2)_0\in N_{\Gamma_q}(\varrho, 1,1)_1\subset D_S$.

\item[-] For all $a,t\in\mathbb{F}_q$, $  t\ne 0,1$, $(a, a+t, 0)_1\in D_Q$ is adjacent to $ (x, ax+a+t, a^2x)_0\in N_{\Gamma_q}(u, u,p(u))_1\subset D_S$ where $x,u\in\mathbb{F}_q $  are the solution of the system
    $$\left.\begin{array}{cl}   a(x+1)+t &=u(x+1)\\ a^2x &=u^2(x+1)+u+1 \end{array}\right\} \Leftrightarrow
    \left\{\begin{array}{cl}   (x+1)(u+a)  &=t,\\ (a^2+u^2)(x+1)+u+1 &=a^2.\end{array}\right. $$
 Since $q$ is even $(a^2+u^2)=(a+u)^2$, yielding that $t(u+a)+u+1=a^2$, or equivalently $(t+1)(u+a)=p(a)$. Consequently $u=a+(t+1)^{-1}p(a)$ and    $x=1+t(t+1)p(a)^{-1}$.
\end{itemize}

 Let $H$ be the subgraph of $\Gamma_q$ induced by $   D_Q\cup D_S$. The existence of the matching joining each vertex of $  D_Q\cap V_1$ with one vertex in $D_S\cap V_0$ allows us to conclude that  the vertices of $H$ have degree 3 or $q+1$ in $H$, and the diameter of $H$ is 5. Moreover since
  the girth is 8 we obtain $|N_{\Gamma_q}((D_Q\cup D_S))\cap (V(\Gamma_q)\setminus (D_Q\cup D_S)) |=2(q-2)(q+1)^2=2(q^3-3q-2)=|V(\Gamma_q)\setminus (D_Q\cup D_S) |$
yielding that $|N_{\Gamma_q}(v)\cap (D_Q\cup D_S)|= 1$ for all $v\in V(\Gamma_q)\setminus (D_Q\cup D_S)$. Therefore $D_Q\cup D_S$ is a perfect dominating set of $\Gamma_q$ and the result holds for $q\ge 8$.

Finally, for $q=4$,  note that $p(x)=1+x+x^2\in \{0,1\}$ for all $x\in \mathbb{F}_4$ which implies that $N_{\Gamma_4}[S]\cup \left(\bigcap_{b\in S}N_{\Gamma_4}^2(b)\right)$ and $N_{\Gamma_4}[Q]\cup \left(\bigcap_{b\in Q}N_{\Gamma_4}^2(b)\right)$ defined in Proposition \ref{perfect2} are not vertex disjoint. Take $x\in \mathbb{F}_4\setminus\{0,1\}$, and let $Q'=\{(\varrho,j,x)_{0}: j\in \mathbb{F}_4\}\cup \{(\varrho,\varrho,0)_{0}\}$ and $S=\{(u,u,1+u+u^2)_{1}: u\in \mathbb{F}_4\}\cup \{(\varrho,1,1)_{1}\}$. Then the set
 $$ N_{\Gamma_4}[Q']\cup \left(\bigcap_{a\in Q'}N_{\Gamma_4}^2(a)\right)\cup N_{\Gamma_4}[S]\cup \left(\bigcap_{b\in S}N_{\Gamma_4}^2(b)\right)$$
is a perfect dominating set of the Moore $(5,8)$-graph of cardinality 70. Consequently by removing this set we obtain a 4-regular graph of girth 8 with 100 vertices.
\end{proof}

%%%%%%%%%%%%%%%%%%%%%%%%%%%%%%%%%%%%%%%%%%

\begin{theorem}\label{regq}
Let $q\ge 4$ be an even  prime power  and  $\Gamma_q=\Gamma_q[V_{0},V_{1}]$ be the Moore $(q+1, 8)$-graph given in Definition \ref{cage}.
 Then there is
a $q$-regular graph of girth 8 and order $2(q^3-3q-2)$.
\end{theorem}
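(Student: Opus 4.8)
```latex
\textbf{Proof proposal for Theorem \ref{regq}.}

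The plan is to exhibit the desired $q$-regular graph of girth $8$ as the graph obtained from the Moore $(q+1,8)$-graph $\Gamma_q$ by deleting the perfect dominating set $D:=D_Q\cup D_S$ constructed in Proposition \ref{perfect2}. The whole argument rests on the observation recorded after Definition \ref{cage}: if $G$ is $k$-regular and $U$ is a perfect dominating set, then $G-U$ is $(k-1)$-regular. Since $\Gamma_q$ is $(q+1)$-regular (by Theorem \ref{main8}) and $D$ is a perfect dominating set (by Proposition \ref{perfect2}), the graph $\Gamma_q - D$ is automatically $q$-regular. Thus the three things I need to check are the regularity (free, as just noted), the order, and the girth.

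For the \emph{order}, I would simply subtract: by Proposition \ref{perfect2} the set $D$ has cardinality $2(q^2+4q+3)$, and by Theorem \ref{main8} the graph $\Gamma_q$ is a Moore $(q+1,8)$-graph, so its order is $n_0(q+1,8)=2(1+q+q^2+q^3)$ from \eqref{lower}. Hence
\[
|V(\Gamma_q-D)| \;=\; 2(1+q+q^2+q^3) - 2(q^2+4q+3) \;=\; 2(q^3-3q-2),
\]
which is the claimed order. (This matches the count $|V(\Gamma_q)\setminus(D_Q\cup D_S)|=2(q^3-3q-2)$ already appearing in the proof of Proposition \ref{perfect2}, so the bookkeeping is consistent.)

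For the \emph{girth}, the key point is that deleting vertices can only destroy cycles, never create shorter ones, so $g(\Gamma_q-D)\ge g(\Gamma_q)=8$; the nontrivial direction is the lower bound, i.e.\ that girth $8$ actually survives. Here I would argue that $\Gamma_q-D$ still contains at least one $8$-cycle. The cleanest route is to note that a $q$-regular graph with girth $\ge 8$ has at least $n_0(q,8)$ vertices, and to check that our order $2(q^3-3q-2)$ is not so small as to force a larger girth; but more directly, since the deleted set $D$ is a proper and highly structured subset, I would locate an explicit $8$-cycle among the surviving vertices using the edge formulas of Definition \ref{cage} (for instance among vertices $(a,b,c)_1$ and $(i,j,k)_0$ with all coordinates in $\mathbb{F}_q$ and avoiding the matched vertices listed in the proof of Proposition \ref{perfect2}). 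Because $\Gamma_q$ is bipartite with girth $8$ and the vertices outside $D$ still have full degree $q$, such a cycle exists for all $q\ge 4$.

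The main obstacle I anticipate is precisely this last verification that girth $8$ is preserved rather than increased: while regularity and order are immediate arithmetic consequences of Proposition \ref{perfect2}, one must genuinely confirm that enough of the girth-$8$ structure of $\Gamma_q$ lies outside $D$. I would handle this by exploiting the fact that the induced subgraph $H$ on $D$ (identified in the proof of Proposition \ref{perfect2} as having diameter $5$) occupies only a local region, leaving the bulk of the Moore graph intact; any short cycle in $\Gamma_q-D$ would also be a short cycle in $\Gamma_q$, contradicting $g(\Gamma_q)=8$, and a concrete $8$-cycle avoiding $D$ establishes equality.
```
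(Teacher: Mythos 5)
Your proposal is correct and takes exactly the paper's route: the paper's entire proof of Theorem \ref{regq} is the one-line observation that removing the perfect dominating set of Proposition \ref{perfect2} from $\Gamma_q$ yields the desired graph, with regularity following from the perfect-domination property and the order from the cardinality count $2(1+q+q^2+q^3)-2(q^2+4q+3)=2(q^3-3q-2)$. Your extra care about verifying that the girth remains exactly $8$ (rather than increasing) goes beyond what the paper records, and is easily settled since a bipartite $q$-regular graph of girth at least $10$ would need on the order of $2q^4$ vertices, far exceeding $2(q^3-3q-2)$.
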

\begin{proof}
The result is obtained by removing from $\Gamma_q$ the perfect dominating set  found in Proposition \ref{perfect2}.
\end{proof}

%{\bf aggiustare il seguente paragrafo}

In \cite{GH08} G\`{a}cs and H\'{e}ger    have obtained   $(q,8)$--bipartite graphs  on $2q(q^2-2)$
vertices if $q$ is odd, or on $2(q^3-3q-2)$ vertices if $q$ is
even,   using a regular point pair in a classical generalized quadrangle. Note
that in Theorem \ref{regq} we explicitly obtain  $(q,8)$--bipartite graphs of the same cardinality
using Definition \ref{cage}.
In  \cite{ABH10}  $(k,8)$--regular balanced
bipartite graphs for all prime  power   $q$  such that $3\le
k\le q$ of order $2k ( q^2-1)$ have been obtained  as   subgraphs
of the incidence graph of a generalized quadrangle.
In  \cite{B09} this result has been improved by constructing $(k,8)$-regular
balanced bipartite graphs of order $2q (kq -1)$.
%%%%%%%%%%%%%%%%%%%%%%%%%%%%%%%%%%%%%%%%%%%%

\section*{Acknowledgements}
This research  was supported by the Ministerio de Educaci\'{o}n y Ciencia,
Spain, the European Regional Development Fund (ERDF) under project
MTM2011-28800-C02-02; by the Catalonian Government under
project 1298 SGR2009; by CONACyT-M\'{e}xico under project 57371;
by PAPIIT-M\'{e}xico under project 104609-3; by the Italian Ministry MIUR
and carried out within the activity of INdAM-GNSAGA.

%{\bf check uniformare notazioni biblio}

%\newpage
%
%\pagestyle{empty}
%
%\begin{center}
%\Large Corresponding Author:
%\end{center}
%
%\noindent {\Large \bf Domenico Labbate}\\
%{\Large Dipartimento di Matematica Informatica ed Economia}\\
%{\Large Universit\`{a} degli Studi della Basilicata}\\
%{\Large Viale dell'Ateneo Lucano 10, I-85100 Potenza, Italy.}\\
%{\Large domenico.labbate@unibas.it}\\

\end{document}